\newtheorem{theorem}{Theorem}[section]
\newtheorem{definition}[theorem]{Definition}
\newtheorem{lemma}[theorem]{Lemma}
\newtheorem{corollary}[theorem]{Corollary}
\newtheorem{question}[theorem]{Question}
\newtheorem{proposition}[theorem]{Proposition}
\title{Characteristic Subgroup Growth}
\author{Liam Hanany and Alexander Lubotzky}
\thanks{The authors were supported by the European Research Council (ERC) under the European Union’s Horizon 2020 (N. 882751)}
\date{\today}
\begin{document}

\begin{abstract}
Let $s_n^\mathrm{ch}(\Gamma)$ denote the number of characteristic subgroups of index at most $n$ in a finitely generated group $\Gamma$. In response to a question of I. Rivin we show that if $\Gamma = F_r$ is the free group on $r \geq 2$ generators then the growth type of $s_n^{\mathrm{ch}}(F_r)$ is $n^{\mathrm{log}(n)}$. This is in contrast with the expectation of W. Thurston who predicted that there should be a difference between $r = 2$ and $r > 2$. Along the way we answer a question of \cite{barnea2020large} on the normal subgroup growth of large groups.
\end{abstract}

\maketitle
\begin{center}
\textit{Dedicated to the memory of Otto Kegel, an inspiring mathematician and human being.}
\end{center}

\section{Introduction}
Let $\Gamma$ be a finitely generated group. Denote by $a_n(\Gamma)$ the number of subgroups of index $n,$ and by $s_n(\Gamma)$ the number of subgroups of index at most $n$. Similarly, denote by $a_n^\lhd(\Gamma)$ the number of normal subgroup of index $n$, and by $s_n^\lhd(\Gamma)$ the number of normal subgroups of index at most $n$.

We use the convention of \cite[Section 0.1]{lubotzky2012subgroup} and say that a function $g(n)$ has growth type $f(n)$ if there are constants $a,b > 0$ such that $g(n) \leq f(n)^a$ for every $n$ and $g(n) \geq f(n)^b$ for infinitely many choices of $n$. We say that a group $\Gamma$ has subgroup growth of type $f(n)$ if $s_n(\Gamma)$ has growth type $f(n)$, and $\Gamma$ has normal subgroup growth of type $f(n)$ if $s_n^\lhd(\Gamma)$ has growth type $f(n)$.

It has been known for a long time that for $F = F_r$ the free group on $r$ generators that $a_n(F_r) \sim n \cdot n!^{r-1}$, see \cite[Chapter 2]{lubotzky2012subgroup} and the references therein.
A combination of the results of the second author \cite{lubotzky2001enumerating} and Mann \cite{mann1998enumerating} shows that $F_d$ has normal subgroup growth of type $n^{\mathrm{log}(n)}$. In fact, they showed more, proving that $s_n^\lhd(F_r) = n^{\Theta(r\log n)},$ where the implicit constants in the $\Theta$-notation are independent of both $n, r.$

In 2011, Igor Rivin \cite{Thurston} raised the question of the characteristic subgroup growth of the free group. That is, let $a_n^\mathrm{ch}(\Gamma)$ denote the number of characteristic subgroups of index $n$ and let $s_n^\mathrm{ch}(\Gamma)$ be the number of characteristic subgroups of index at most $n$.

He asked what is the growth of $s_n^\mathrm{ch}(F_r)$ for the free group of rank $r$. In response, William Thurston \cite{Thurston} predicted that the answer would be different between $r=2$ and $r\geq 3$, and that for $r\geq 3$ the growth will be dominated by a polynomial, i.e. of type $n$.

The initial trigger to our work came from the observation that the result in \cite{chen2025finite} implies that the characteristic growth for $F_2$ is of type $n^{\mathrm{log}(n)}$, which is maximal possible as it gives a lower bound for the normal subgroup growth. The main observation was that the representation used in \cite{chen2025finite} combined with \cite[Theorem B part ii]{larsen2004g2} gives this characteristic growth type. See Section \ref{Final section} for more details.

Admittedly, starting this project our intuition was similar to that of Thurston, without knowing his prediction, and we also assumed that there would be a difference between $r = 2$ and $r \geq 3$. To our surprise, and against Thurston's initial intuition as well as our own, we prove the following
\begin{theorem}
\label{Characteristic Growth of Free Groups}
For every $r \geq 2$, the characteristic subgroup growth of $F_r$ has type $n^{\mathrm{log}(n)}$.
\end{theorem}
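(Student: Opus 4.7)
The upper bound $s_n^{\mathrm{ch}}(F_r)\le n^{O(\log n)}$ is immediate, since every characteristic subgroup is normal and so $s_n^{\mathrm{ch}}(F_r)\le s_n^{\lhd}(F_r)=n^{\Theta(r\log n)}$ by the Lubotzky--Mann bound quoted above. All the content lies in the matching lower bound.

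The general principle I would exploit is the following. If $N\trianglelefteq F_r$ is a characteristic subgroup and $Q:=F_r/N$, then $\mathrm{Aut}(F_r)$ acts naturally on $Q$, and the characteristic subgroups of $F_r$ containing $N$ are in bijection with the normal subgroups of $Q$ fixed by this action. Thus it suffices to produce a characteristic surjection $\pi:F_r\twoheadrightarrow Q$ such that $Q$ admits $n^{\Omega(\log n)}$ $\mathrm{Aut}(F_r)$-invariant normal subgroups of index at most $n$. A particularly favorable situation is the one in which the induced map $\mathrm{Aut}(F_r)\to \mathrm{Aut}(Q)$ lands inside $\mathrm{Inn}(Q)$: then \emph{every} normal subgroup of $Q$ is automatically $\mathrm{Aut}(F_r)$-invariant, and it is enough to find a characteristic $Q$ with $n^{\Omega(\log n)}$ normal subgroups.

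For the $F_2$ case I would carry out the template sketched in the introduction: combine the characteristic quotient of $F_2$ built in \cite{chen2025finite} with \cite[Theorem~B(ii)]{larsen2004g2} to control the $\mathrm{Aut}(F_2)$-orbits on the relevant family of representations, which already produces the desired $n^{\log n}$ invariant normal subgroups. For $F_r$ with $r\ge 3$, against the Thurston/Rivin heuristic, my plan is to build a characteristic surjection $F_r\twoheadrightarrow Q$ onto a \emph{large} group $Q$ (one with a finite-index subgroup surjecting onto a free group of rank at least two), and then invoke the announced answer to the question of \cite{barnea2020large}, namely that every large group has normal subgroup growth of type at least $n^{\log n}$. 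I would prove this large-group bound by pulling the Lubotzky--Mann normal subgroups of $F_2$ back through the given finite-index surjection and then inducing them up to $Q$.

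The step I expect to be most delicate, and the reason the theorem is counterintuitive for $r\ge 3$, is arranging that sufficiently many of the normal subgroups of $Q$ produced by the large-group argument are genuinely $\mathrm{Aut}(F_r)$-invariant. A priori $\mathrm{Aut}(F_r)$ may amalgamate them into short orbits and destroy the count. The two routes I would try are (i) designing $Q$ so that the map $\mathrm{Aut}(F_r)\to \mathrm{Aut}(Q)$ factors through $\mathrm{Inn}(Q)$, forcing every normal subgroup of $Q$ to pull back to a characteristic subgroup of $F_r$; and (ii) bounding the $\mathrm{Aut}(F_r)$-orbit sizes polynomially, so that at least $n^{\Omega(\log n)}$ orbits remain. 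Making either (i) or (ii) work with an explicit, computable characteristic quotient of $F_r$ is the technical heart of the argument and the point at which the rank $r\ge 3$ case is subtler than $r=2$.
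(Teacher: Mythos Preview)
Your proposal is a plan rather than a proof: you correctly isolate the obstruction (controlling the $\mathrm{Aut}(F_r)$-orbits on the normal subgroups you produce) but you do not resolve it. You explicitly leave ``the technical heart of the argument'' open, and neither route (i) nor (ii) is carried out. In addition, your proposed proof that large groups have normal subgroup growth $n^{\log n}$ (``pull the Lubotzky--Mann normal subgroups of $F_2$ back through the finite-index surjection and then induce them up to $Q$'') does not work: normal subgroups of a finite-index subgroup $\Delta\le Q$ need not be normal in $Q$, and passing to normal cores destroys the index bound. This is exactly the difficulty the paper's introduction flags when explaining why the question of \cite{barnea2020large} is nontrivial.

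The paper's argument avoids all of this with a single uniform mechanism that works for every $r\ge 2$ simultaneously (no case split between $r=2$ and $r\ge 3$). Take the mod-$p$ Zassenhaus filtration $D_n=D_n(F_r)$. Each $D_n$ is characteristic, and the key observation (Lemma~\ref{Automorphism Action}) is that any automorphism acting trivially on $D_1/D_2\cong(\mathbb{Z}/p)^r$ acts trivially on \emph{every} subquotient $D_n/D_{n+1}$. Hence the $\mathrm{Aut}(F_r)$-action on $D_n/D_{n+1}$ factors through the fixed finite group $\mathrm{SL}_r^{\pm 1}(\mathbb{F}_p)$, independently of $n$. This is precisely your route (ii) realized with a uniform bound: orbit sizes are bounded by $|\mathrm{SL}_r^{\pm 1}(\mathbb{F}_p)|$, but more to the point, $\mathrm{Aut}(F_r)$-invariant subgroups between $D_{n+1}$ and $D_n$ are just $G$-submodules of $D_n/D_{n+1}$ for the fixed finite group $G=\mathrm{SL}_r^{\pm 1}(\mathbb{F}_p)$. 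Then the Barnea--Larsen lemmas (Corollary~\ref{Lots of Submodules}) give $e^{cd^2}$ submodules of any $d$-dimensional $\mathbb{F}_p[G]$-module, and the Witt-type dimension formula (Proposition~\ref{Zassenhaus dimension computation}, Corollary~\ref{exponential growth}) shows $d=\dim(D_n/D_{n+1})$ is a positive fraction of $\log_p[F_r:D_{n+1}]$. This yields $n^{\Omega(\log n)}$ characteristic subgroups directly. The $F_2$/Burau argument you sketch does appear in the paper, but only as an alternative route in the final section; it is not needed for the main proof.
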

Equivalently stated, this means that on a logarithmic scale, a positive proportion of $r$-generated groups up to a given cardinality $n$ are in fact a characteristic quotient of $F_r.$ For such a characteristic quotient, every Nielsen transformation on the $r$ generators yields an equivalent $r$-generating set up to an automorphism of the finite group.

In fact our theorem is even more general. Recall that a finitely generated group is called large if there is a finite index subgroup with a surjective map to a non-abelian free group.
\begin{theorem}
\label{Characteristic Growth of Large Groups}
For every finitely generated $\Gamma$, if $\Gamma$ is large then the characteristic subgroup growth of $\Gamma$ is of type $n^{\mathrm{log}(n)}$.
\end{theorem}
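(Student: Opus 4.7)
\textbf{The upper bound} comes for free: any $d$-generated group $\Gamma$ is a quotient of $F_d$, so its normal subgroups correspond to normal subgroups of $F_d$ containing a fixed kernel, yielding $s_n^{\mathrm{ch}}(\Gamma) \leq s_n^\lhd(\Gamma) \leq s_n^\lhd(F_d) = n^{O(\log n)}$ by the Lubotzky--Mann bound recalled in the introduction; no largeness is needed. So only the matching lower bound for large $\Gamma$ requires work.

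\textbf{Passage to a characteristic subgroup.} Let $H \leq \Gamma$ be a finite-index subgroup with a surjection onto $F_r$, $r \geq 2$, and let $K$ be the intersection of all subgroups of $\Gamma$ of index at most $[\Gamma:H]$; then $K$ is characteristic in $\Gamma$, has finite index, and is contained in $H$, and Nielsen--Schreier gives a surjection $\psi : K \twoheadrightarrow F_s$ for some $s \geq 2$. Because $K$ is characteristic in $\Gamma$, every $\alpha \in \mathrm{Aut}(\Gamma)$ restricts to an automorphism of $K$, and hence every characteristic subgroup of $K$ is already characteristic in $\Gamma$. It therefore suffices to prove $s_n^{\mathrm{ch}}(K) \geq n^{c \log n}$ for infinitely many $n$.

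\textbf{Lifting characteristic quotients of $F_s$.} For each of the $\geq n^{c\log n}$ characteristic quotients $\varphi : F_s \twoheadrightarrow Q$ of order at most $n$ supplied by Theorem \ref{Characteristic Growth of Free Groups}, I would form $\pi := \varphi \circ \psi : K \twoheadrightarrow Q$, take $M := \psi^{-1}(\ker\varphi)$, and pass to the characteristic closure $\widetilde M := \bigcap_{\alpha \in \mathrm{Aut}(K)} \alpha(M)$. The key observation is that because $\ker\varphi$ is $\mathrm{Aut}(F_s)$-invariant, the identity $\alpha(M) = M$ holds as soon as $\psi \circ \alpha^{-1}$ differs from $\psi$ by post-composition with an element of $\mathrm{Aut}(F_s)$, i.e.\ whenever $\alpha$ stabilizes $\ker\psi$. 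Consequently the $\mathrm{Aut}(K)$-orbit of $M$ is bounded, \emph{uniformly in $Q$}, by the $\mathrm{Aut}(K)$-orbit of the fixed subgroup $\ker\psi \leq K$.

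\textbf{Main obstacle.} The crux is to bound the orbit size $T$ of $\ker\psi$ under $\mathrm{Aut}(K)$ by a constant depending only on $K$. With such a bound, $[K:\widetilde M] \leq |Q|^T$, so the $\geq n^{c\log n}$ characteristic quotients of $F_s$ yield $\geq m^{c' \log m}$ distinct characteristic subgroups of $K$ of index at most $m = n^T$ (the distinctness of the $\widetilde M$'s follows by recovering the isomorphism type of $K/\widetilde M$ from its composition factors). Establishing this orbit bound is the technical heart of the proof: it is not a priori clear that the $\mathrm{Aut}(K)$-orbit of $\ker\psi$ is even finite, since it lives in the set of normal subgroups of $K$ with free quotient of rank $s$, which can be infinite. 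I expect the bound to be obtained by an additional characteristic reduction---either shrinking $K$ once more so that $\ker\psi$ becomes characteristic in $K$, or replacing $\psi$ by a canonical $\mathrm{Aut}(K)$-equivariant map onto a product of conjugate free quotients---after which the combinatorics of characteristic quotients of the resulting target can be controlled by invoking Theorem \ref{Characteristic Growth of Free Groups} one more time.
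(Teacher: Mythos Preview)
Your ``Main obstacle'' is a genuine gap, and the fix you sketch does not close it. The $\mathrm{Aut}(K)$-orbit of $\ker\psi$ can easily be infinite: already when $K$ itself is free (e.g.\ $\Gamma=F_d$), the kernels of surjections $K\twoheadrightarrow F_s$ form an infinite $\mathrm{Aut}(K)$-set, and no further passage to a characteristic subgroup of $K$ will make a particular $\ker\psi$ characteristic. Your suggested alternative---replacing $\psi$ by an equivariant map onto a product of free quotients---runs into the same infinity. There is also a secondary problem you gloss over: the characteristic quotients of $F_s$ produced by Theorem \ref{Characteristic Growth of Free Groups} are all $p$-groups with the same composition factors, so ``recovering the isomorphism type of $K/\widetilde M$ from its composition factors'' does not separate the $\widetilde M$'s.

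The paper sidesteps both issues by abandoning the lifting idea entirely. Instead of pulling back characteristic subgroups of $F_s$ along $\psi$, it runs the Zassenhaus filtration $D_n(\Gamma_0)$ on the characteristic subgroup $\Gamma_0$ ($=K$) itself. These subgroups are characteristic in $\Gamma_0$ by construction, and by Lemma \ref{Automorphism Action} the $\mathrm{Aut}(\Gamma_0)$-action on each subquotient $D_n/D_{n+1}$ factors through the \emph{fixed} finite group $\mathrm{Aut}(D_1(\Gamma_0)/D_2(\Gamma_0))$. The surjection $\psi:\Gamma_0\twoheadrightarrow F_r$ is used only to bound $\dim D_n(\Gamma_0)/D_{n+1}(\Gamma_0)$ from below (via $\psi(D_n(\Gamma_0))=D_n(F_r)$ and Proposition \ref{Zassenhaus dimension computation}); taking $r\geq 3$ ensures this dimension grows faster than $2^n$, so for infinitely many $n$ the top layer dominates the sum of all previous ones. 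Then Corollary \ref{Lots of Submodules} applied to $D_n/D_{n+1}$ as a module over that fixed finite group produces the required $e^{c(\log N)^2}$ characteristic subgroups of index $\leq N$. No orbit bound and no distinctness argument for closures are needed.
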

This answers \cite[Problem 7]{barnea2020large} in a strong way. They ask if the normal subgroup growth of a large group is of type $n^{\mathrm{log}(n)}.$ This question looks difficult, as there is no reason apriori that the normal subgroup growth type will be a comensurability invariant. Indeed, if $\Delta \leq \Gamma$ is of finite index, the fact that $\Delta$ has large normal subgroup growth does not immediately imply large normal subgroup growth for $\Gamma$, as normal subgroups of $\Delta$ are no longer necessarily normal in $\Gamma$, even if we assume without loss of generality that $\Delta$ is characteristic in $\Gamma$. However, using our approach, characteristic subgroups of $\Delta$ will in fact give us normal (and even characteristic) subgroups of $\Gamma$.

The reader is encouraged to look at the paper \cite{barnea2020large} that seems to be the only work so far which has dealt with characteristic subgroup growth. Our work uses in a crucial way Lemma 9 from that paper.

The results in \cite{barnea2020large} and Theorems 1,2 above may suggest that the normal subgroup growth and the characteristic subgroup growth are always the same.
However, this is not true, and we construct a counter-example in Section \ref{Section on Segal's groups}, using a construction given by Segal \cite{segal2000finite}.
\begin{proposition}
\label{A group with larger normal subgroup growth}
There exists a finitely generated group $\Gamma$ with normal subgroup growth and characteristic subgroup growth of different types.
\end{proposition}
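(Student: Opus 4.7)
The strategy is to exhibit a finitely generated $\Gamma$ whose automorphism group is rich enough to collapse many of its normal subgroups into a much smaller number of $\mathrm{Aut}(\Gamma)$-orbits. The motivating finite example is $G = T^k$, where $T$ is a nonabelian finite simple group: $G$ has exactly $2^k$ normal subgroups (one per subset of the $k$ factors), while $\mathrm{Aut}(G) \supseteq \mathrm{Aut}(T) \wr S_k$ permutes the factors and so these $2^k$ normal subgroups form only $k+1$ $\mathrm{Aut}(G)$-orbits. Already in a single such $G$, the count of normal subgroups exceeds the count of characteristic subgroups by an exponential factor.

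To promote this local discrepancy into a difference of growth types for a single finitely generated group, I would apply the construction of Segal in \cite{segal2000finite}. His method produces finitely generated abstract groups with prescribed inverse systems of finite quotients, together with control over the outer automorphism structure on each level. Applied with a target system built from $\{T^k\}_{k\geq 1}$, it yields a finitely generated $\Gamma$ whose finite quotients include each $T^k$, and for which every permutation $\sigma \in S_k$ of the factors of $T^k$ is induced by some automorphism of the abstract group $\Gamma$. Realising this lift on the abstract level, simultaneously with finite generation of $\Gamma$, is the principal technical obstacle and is exactly what Segal's machinery is designed to supply.

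Granted this $\Gamma$, the separation of growth types reduces to elementary counting. Each $T^k$-quotient yields $2^k$ distinct normal subgroups of $\Gamma$ of index at most $|T|^k$, so for $n = |T|^k$ one gets
\[
s_n^{\lhd}(\Gamma)\ \geq\ 2^k\ =\ n^{\,\log 2 / \log |T|},
\]
a polynomial lower bound of positive degree on the normal subgroup growth, holding along an infinite sequence of $n$. By construction, the same $2^k$ subgroups fuse into only $k+1$ characteristic subgroups per level, contributing at most $O((\log n)^2)$ characteristic subgroups of index at most $n$ in total. A final verification, essentially built into the construction, confirms that no other characteristic subgroups of $\Gamma$ appear in significant number. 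The characteristic subgroup growth of $\Gamma$ is then at most polylogarithmic in $n$, strictly slower than the polynomial lower bound on its normal subgroup growth, so the two growth types differ, as desired.
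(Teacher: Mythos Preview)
Your high-level strategy---exploit that $T^k$ has $2^k$ normal subgroups but only $k+1$ characteristic ones, then realise this phenomenon inside a finitely generated group via Segal's construction---is exactly the paper's approach. However, the specific implementation you sketch cannot work as written, and the obstruction is not merely technical.

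Segal's theorem (Theorem~\ref{Segal dense subgroup} here, \cite[Theorem~5]{segal2000finite}) requires a single finitely generated perfect group $\Lambda$ that surjects onto \emph{every} $H_n$ in the sequence. If you take $H_k = T^k$ for a \emph{fixed} finite simple group $T$, no such $\Lambda$ exists: a finitely generated $\Lambda$ has only finitely many homomorphisms to the finite group $T$, so it can surject onto $T^k$ only for bounded $k$. This is precisely why the paper does \emph{not} fix $T$. Instead it takes $H_k = \mathrm{PSL}_3(\mathbb{F}_{p_k})^{p_k}$ with the prime $p_k$ varying, and then shows (Lemma~\ref{Suslin group}) that $\Lambda = \mathrm{SL}_3(\mathbb{Z}[t])$ surjects onto each $\mathrm{PSL}_3(\mathbb{F}_p)^p$ via the $p$ distinct specialisations $t \mapsto 0,1,\dots,p-1$. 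Varying the underlying simple group is not a cosmetic choice; it is what makes Segal's hypothesis satisfiable.

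A second point you underestimate is the lifting of the permutation automorphisms from $W_n$ to the abstract group $\Gamma$. Segal's theorem gives you finite generation and the congruence subgroup property, but it does \emph{not} by itself guarantee that the $\mathrm{Sym}(p_n)$-action on the factors of $H_n$ is induced by $\mathrm{Aut}(\Gamma)$. The paper extracts an additional structural feature of Segal's construction (Proposition~\ref{Inductive construction segal}): the abstract group $\Gamma$ itself decomposes as a permutational wreath product $\Gamma \simeq \Gamma_n \wr W_n$ at every level, and this is what allows one to extend a coordinate-permutation of $W_n$ to an automorphism of $\Gamma$. Your phrase ``exactly what Segal's machinery is designed to supply'' conflates these two separate ingredients.
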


\section{Zassenhaus filtration}
The main tool for our proof will be the Zassenhaus filtration, sometimes also called the Dimension subgroups see \cite[Chapters 11 and 12]{dixon2003analytic}, \cite{minavc2016dimensions}.
It is defined as follows, see \cite[Definition 11.1, Theorem 11.2]{dixon2003analytic}:
\begin{definition}
Let $\gamma_n(\Gamma)$ be the lower central series of $\Gamma$. Define $D_n(\Gamma) = \prod_{ip^j \geq n} \gamma_i(\Gamma)^{p^j}$.
Equivalently, this sequence can be defined inductively by $D_1(\Gamma) = \Gamma$ and $$D_n(\Gamma) = D_{\lceil\frac{n}{p}\rceil}(\Gamma)^p \prod_{j + k = n}[D_j(\Gamma), D_k(\Gamma)]$$.
\end{definition}
In particular $D_2(\Gamma) = \Gamma^p[\Gamma, \Gamma]$.
\begin{lemma}
\label{Automorphism Action}
Let $\varphi:\Gamma \to \Gamma$ be an automorphism of $\Gamma$ acting trivially on the $p$-elementary abelianization $\Gamma/\Gamma^p[\Gamma,\Gamma] = D_1 / D_2$ of $\Gamma$. Then $\varphi$ acts trivially on the subquotients of the Zassenhaus filtration $D_n / D_{n + 1}$.
\end{lemma}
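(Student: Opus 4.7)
The plan is to induct on $n$, with the case $n = 1$ being exactly the hypothesis. Each subgroup $\gamma_i(\Gamma)^{p^j}$ is characteristic in $\Gamma$, hence every $D_k$ is characteristic, and $\varphi$ induces a well-defined automorphism $\bar\varphi_n$ on the quotient $D_n/D_{n+1}$. This quotient is an abelian $\mathbb{F}_p$-vector space (since $[D_n, D_n] \subseteq D_{2n}$ and $D_n^p \subseteq D_{pn}$ are both contained in $D_{n+1}$ for $n \geq 1$), so to show $\bar\varphi_n$ is trivial it suffices to check triviality on a generating set of $D_n$ modulo $D_{n+1}$. The inductive definition of $D_n$ supplies a convenient one: the $p$-th powers $x^p$ with $x \in D_m$ where $m := \lceil n/p \rceil$, together with the commutators $[y, z]$ for $y \in D_j$, $z \in D_k$, $j + k = n$.

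For a commutator generator, the inductive hypothesis gives $\varphi(y) = y\,y'$ with $y' \in D_{j+1}$ and $\varphi(z) = z\,z'$ with $z' \in D_{k+1}$. Expanding $[yy', zz']$ via the basic identities $[ab, c] = [a, c]^{b} [b, c]$ and $[a, bc] = [a, c]\,[a, b]^{c}$, then applying the filtration inclusions $[D_i, D_\ell] \subseteq D_{i+\ell}$, one sees that every correction term contains at least one commutator bracket with $y'$ or $z'$, hence lies in $D_{n+1}$. Thus $\varphi([y, z]) \equiv [y, z] \pmod{D_{n+1}}$.

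For a $p$-th power generator, write $\varphi(x) = x\,x'$ with $x' \in D_{m+1}$ and apply the Hall--Petrescu collection formula:
\[
(xx')^p \;=\; x^p \, (x')^p \, c_2^{\binom{p}{2}} c_3^{\binom{p}{3}} \cdots c_{p-1}^{\binom{p}{p-1}} c_p,
\]
where each $c_i \in \gamma_i(\langle x, x' \rangle)$. Any iterated commutator in $x$ alone is trivial, so each $c_i$ with $i \geq 2$ must involve $x'$ at least once and therefore lies in $D_{(i-1)m + (m+1)} = D_{im+1}$. For $1 < i < p$, the coefficient $\binom{p}{i}$ is divisible by $p$, so $c_i^{\binom{p}{i}}$ is a power of $c_i^{p} \in D_{p(im+1)} \subseteq D_{n+1}$ (using $pm \geq n$ and $i \geq 2$). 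The remaining factors $c_p$ and $(x')^p$ lie in $D_{pm+1}$ and $D_{p(m+1)}$ respectively, both contained in $D_{n+1}$. Putting everything together, $\varphi(x^p) = (xx')^p \equiv x^p \pmod{D_{n+1}}$, completing the inductive step.

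The main obstacle is the degree arithmetic in the Hall--Petrescu step: one has to simultaneously exploit that $x'$ sits one filtration level deeper than $x$ and that the interior binomial coefficients $\binom{p}{i}$ are divisible by $p$ (which gets amplified through $D_k^p \subseteq D_{pk}$), then verify that in every case the total Zassenhaus weight is at least $n+1$. Once this bookkeeping is organised, the induction runs smoothly, and the commutator case is a routine application of the standard identities together with $[D_i, D_\ell] \subseteq D_{i+\ell}$.
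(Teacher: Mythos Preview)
Your proof is correct and follows essentially the same approach as the paper: induction on $n$, handling the commutator generators by expanding $[yy',zz']$ and using $[D_i,D_\ell]\subseteq D_{i+\ell}$, and handling the $p$-th power generators via Hall--Petrescu. The only difference is cosmetic: where the paper cites \cite[Corollary~11.10(ii)]{dixon2003analytic} for $(xa)^p\equiv x^p\pmod{D_{mp+1}}$, you unwind that citation and carry out the Hall--Petrescu degree bookkeeping explicitly.
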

\begin{proof}
We prove the claim by induction on $n$. For $n = 1$ it is the hypothesis.

Firstly, fix two elements $x \in D_m, y \in D_k$, for $m,k < n$. Then $\varphi(x) = xa$ and $\varphi(y) = yb$ for some $a \in D_{m + 1}, b \in D_{k + 1}$. Hence,
    $$\varphi([x, y]) = [\varphi(x),\varphi(y)] = [xa, yb] = xayba^{-1}x^{-1}b^{-1}y^{-1} = x[a,y]x^{-1} \cdot xy [a, b] x^{-1}y^{-1} \cdot y[x, b]y^{-1}$$
Note that $[a,y] \in [D_{m+1}, D_k] \leq D_{m + k + 1}$ and $[x, b] \in [D_m, D_{k + 1}] \leq D_{m + k + 1}$. Moreover $$xy[a,b]x^{-1}y^{-1} = [x,y] yx[a,b]x^{-1}y^{-1},$$and therefore $\varphi([x,y]) \in [x,y] D_{m + k + 1}$.

Now, fix $x \in D_m$, for $m < n$. Then $\varphi(x) = xa$ for $a \in D_{m + 1}$, and
$$\varphi(x^p) = \varphi(x)^p = (xa)^p \equiv x^p \ \mathrm{mod} D_{mp+1},$$ by \cite[Corollary 11.10 part ii]{dixon2003analytic}, which is proved using the Hall-Petresco formula.

Combining these two computations, we obtain using the inductive definition for the Zassenhaus filtration that for every $x \in D_n$, $\varphi(x) \in xD_{n + 1}$, as needed.
\end{proof}
We will also use the following two lemmas \cite[Lemma 9, Lemma 10]{barnea2020large}:
\begin{lemma}
    Let $G$ be a finite group and $p$ a prime number. Then there is a constant $c > 0$ such that for all $\mathbb{F}_p[G]$-modules $M$ of $\mathbb{F}_p$-dimension $d$, there exist submodules $M_1 \leq M_2 \leq M$ such that $M_2 / M_1$ is the direct sum of at least $cd$ isomorphic simple modules.
\end{lemma}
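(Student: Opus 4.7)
My plan is to use the nilpotent Jacobson radical of the group algebra $\mathbb{F}_p[G]$. Writing $J = J(\mathbb{F}_p[G])$, one has $J^n = 0$ for some integer $n = n(G,p)$ since $\mathbb{F}_p[G]$ is a finite-dimensional algebra over $\mathbb{F}_p$. I would then consider the radical filtration
$$M \supseteq JM \supseteq J^2 M \supseteq \cdots \supseteq J^n M = 0,$$
and observe that each successive quotient $J^i M / J^{i+1} M$ is annihilated by $J$, hence is a semisimple $\mathbb{F}_p[G]$-module. Their $\mathbb{F}_p$-dimensions sum to $d$ over at most $n$ terms, so at least one of them, call it $Q := J^i M / J^{i+1} M$, satisfies $\dim_{\mathbb{F}_p} Q \geq d/n$.

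Next, I would decompose the semisimple module $Q$ into its isotypic components $Q = \bigoplus_S Q_S$, where $S$ runs over the (finitely many) isomorphism classes of simple $\mathbb{F}_p[G]$-modules and $Q_S \cong S^{m_S}$. Let $K = K(G,p)$ be the number of such isomorphism classes and let $D = D(G,p)$ be the maximum of $\dim_{\mathbb{F}_p} S$ over all simple $S$; both depend only on $G$ and $p$. Since $\sum_S m_S \dim_{\mathbb{F}_p}(S) = \dim_{\mathbb{F}_p} Q \geq d/n$ and there are at most $K$ nonzero summands, a pigeonhole argument produces a simple $S$ with $m_S \dim_{\mathbb{F}_p}(S) \geq d/(nK)$, and hence $m_S \geq d/(nKD)$.

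To conclude, I would set $M_1 := J^{i+1} M$ and take $M_2$ to be the preimage, under the projection $J^i M \twoheadrightarrow Q$, of the isotypic component $Q_S \leq Q$. Then $M_1 \leq M_2 \leq M$ are $\mathbb{F}_p[G]$-submodules of $M$ and $M_2 / M_1 \cong Q_S \cong S^{m_S}$ is a direct sum of at least $cd$ copies of the single simple module $S$, with $c := 1/(nKD)$ depending only on $G$ and $p$. I do not foresee a genuine obstacle: the argument rests entirely on two standard structural facts about modular group algebras—nilpotence of $J(\mathbb{F}_p[G])$ and the finiteness of the set of isomorphism classes of simple $\mathbb{F}_p[G]$-modules—both of which yield uniform bounds independent of $M$. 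The only point requiring a moment's care is to extract a submodule rather than a subquotient of $Q$, which is handled by pulling the isotypic component back to $J^i M$ as above.
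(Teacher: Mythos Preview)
Your argument is correct. The paper does not give its own proof of this lemma but simply cites it as \cite[Lemma 9]{barnea2020large}, so there is no in-paper proof to compare against; your route via the radical filtration of $M$ followed by a pigeonhole on the isotypic components of a large semisimple layer is the standard one, and the constants $n$, $K$, $D$ depend only on the group algebra $\mathbb{F}_p[G]$, giving the required uniformity in $M$.
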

\begin{lemma}
    Let $G$ be a finite group and $M$ a finite dimensional $\mathbb{F}_p[G]$-module. Then there is some $c>0$ such that $M^n$ contains at least $e^{cn^2}$ submodules.
\end{lemma}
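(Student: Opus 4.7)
The plan is to exhibit many submodules of $M^n$ inside a highly isotypic subobject where the count reduces to counting vector subspaces. Since $M$ is a nonzero finite-dimensional $\mathbb{F}_p[G]$-module (the case $M=0$ is vacuous), it contains some simple $\mathbb{F}_p[G]$-submodule $S$. Then $S^n$ embeds as a submodule of $M^n$, and every submodule of $S^n$ is a submodule of $M^n$, so it suffices to lower-bound the number of $\mathbb{F}_p[G]$-submodules of $S^n$.

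By Schur's lemma, $D := \mathrm{End}_{\mathbb{F}_p[G]}(S)$ is a finite division ring over $\mathbb{F}_p$, hence by Wedderburn's little theorem a finite field $\mathbb{F}_q$ with $q = p^k$ for some $k \geq 1$. Since $S^n$ is isotypically semisimple, the assignment $N \mapsto \mathrm{Hom}_{\mathbb{F}_p[G]}(S, N)$ is a lattice isomorphism between $\mathbb{F}_p[G]$-submodules of $S^n$ and right $D$-subspaces of $\mathrm{Hom}_{\mathbb{F}_p[G]}(S, S^n) \cong D^n$. Consequently, the submodule count for $S^n$ equals the number of $D$-subspaces of $D^n$.

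It then remains to invoke the standard Gaussian binomial estimate: the number of $\lfloor n/2 \rfloor$-dimensional $D$-subspaces of $D^n$ equals $\binom{n}{\lfloor n/2 \rfloor}_q \geq q^{\lfloor n/2 \rfloor \lceil n/2 \rceil} \geq q^{(n^2-1)/4}$, giving at least $e^{cn^2}$ submodules for a suitable $c > 0$ (depending on $q$, hence on $M$ and $G$) uniformly in $n \geq 1$. I do not foresee any substantive obstacle; the only step worth checking carefully is the submodule-to-subspace correspondence for isotypic semisimple modules, which is a routine consequence of Schur's lemma and the structure theory of semisimple modules.
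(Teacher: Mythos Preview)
Your argument is correct. The reduction to a single simple summand $S\subseteq M$, the identification of $\mathrm{End}_{\mathbb{F}_p[G]}(S)$ with a finite field $\mathbb{F}_q$ via Schur and Wedderburn, the lattice isomorphism between submodules of $S^n$ and $\mathbb{F}_q$-subspaces of $\mathbb{F}_q^n$, and the Gaussian binomial lower bound $\binom{n}{\lfloor n/2\rfloor}_q\ge q^{\lfloor n/2\rfloor\lceil n/2\rceil}$ are all standard and assembled correctly. Two cosmetic points: the case $M=0$ is not ``vacuous'' but genuinely excluded (no $c>0$ works), so you should simply assume $M\neq 0$; and for $n=1$ the bound $q^{(n^2-1)/4}=1$ is not yet of the form $e^{cn^2}$, so you need to absorb this into the choice of $c$ (e.g.\ take $c\le\log 2$, using that $M$ has at least two submodules).

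As for comparison with the paper: the paper does not prove this lemma at all. It is quoted verbatim as \cite[Lemma~10]{barnea2020large} and used as a black box, together with \cite[Lemma~9]{barnea2020large}, to deduce Corollary~\ref{Lots of Submodules}. Your self-contained argument therefore supplies strictly more than what the paper offers here; in fact it is essentially the natural proof one would expect to find in the cited reference.
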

We summarize these in the following Corollary,
\begin{corollary}
\label{Lots of Submodules}
Fix a finite group $G$ and prime $p$. Then there is a constant $c > 0$ such that every $\mathbb{F}_p[G]$-module of $\mathbb{F}_p$-dimension $d$ has at least $e^{cd^2}$-submodules.
\end{corollary}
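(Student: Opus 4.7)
The plan is to stack the two preceding lemmas on top of each other via the submodule correspondence theorem. Given an $\mathbb{F}_p[G]$-module $M$ of dimension $d$, I would first apply the preceding Lemma (the one from \cite{barnea2020large} on finding an isotypic subquotient) to obtain submodules $M_1 \leq M_2 \leq M$ such that $M_2/M_1 \cong S^n$ is the direct sum of $n \geq c_1 d$ copies of a single simple $\mathbb{F}_p[G]$-module $S$, where $c_1 > 0$ depends only on $G$ and $p$.

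Next, I would observe that because $G$ is finite, there are only finitely many isomorphism classes of simple $\mathbb{F}_p[G]$-modules. Applying the other preceding lemma (on counting submodules of $M^n$) to each simple module separately and taking the minimum of the resulting constants yields a single $c_2 > 0$ such that, uniformly over any simple $\mathbb{F}_p[G]$-module $S$, the module $S^n$ contains at least $e^{c_2 n^2}$ submodules. Plugging in the bound from the first step, $M_2/M_1 \cong S^n$ then has at least $e^{c_2 n^2} \geq e^{c_2 c_1^2 d^2}$ submodules.

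Finally, the correspondence theorem for modules says that submodules of $M_2/M_1$ are in bijection with submodules of $M$ sandwiched between $M_1$ and $M_2$; in particular distinct submodules of $M_2/M_1$ produce distinct submodules of $M$. Setting $c := c_2 c_1^2$ gives the required lower bound of $e^{c d^2}$ submodules of $M$. The only subtle point, and really the only thing beyond bookkeeping, is ensuring that the constant from the counting lemma can be chosen independently of which simple module appears in the first step; finiteness of the set of isomorphism classes of simple $\mathbb{F}_p[G]$-modules settles this immediately, so there is no serious obstacle.
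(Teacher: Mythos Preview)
Your proposal is correct and is exactly the combination of the two preceding lemmas that the paper intends; the paper does not spell out the argument beyond saying ``We summarize these in the following Corollary,'' and your steps (isotypic subquotient, then count submodules of $S^n$, then lift via the correspondence theorem) are precisely that summary. Your remark about taking the minimum of the constants $c_2$ over the finitely many simple $\mathbb{F}_p[G]$-modules is the only detail worth recording, and you handle it correctly.
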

The last ingredient needed for the proof of our main results is a computation of the dimensions of the subquotients in the Zassenhaus filtration of the free group \cite[Proposition 3.4]{minavc2016dimensions} (see also \cite{lazard1954groupes} for a connection to the free reduced Lie algebra):
\begin{proposition}
\label{Zassenhaus dimension computation}
Let $\Gamma = F_r$ be the free group on $r$ generators.
Define
$$w_n(F_r) = \frac{1}{n} \sum_{m | n} \mu(m) r^{\frac{n}{m}}$$
for $\mu$ the Möbius function. Moreover, if $n = p^k m$ for $p \nmid m$ define
$$c_n(F_r) = \sum_{i=0}^kw_{p^im}(F_r)$$
Then $D_n(F_r)/D_{n+1}(F_r)$ is a vector space of dimension $c_n(F_r)$ over $\mathbb{F}_p$.
\end{proposition}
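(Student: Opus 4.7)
The plan is to pass from the group-theoretic filtration to its associated graded object, invoke Lazard's identification of this graded object with a free restricted Lie algebra, and then count dimensions via Witt's formula. The first step is to equip the graded space $\mathrm{gr}(F_r) := \bigoplus_{n \geq 1} D_n(F_r)/D_{n+1}(F_r)$ with the structure of a restricted Lie algebra over $\mathbb{F}_p$: the Lie bracket is induced by the group commutator, using $[D_m, D_n] \subseteq D_{m+n}$, while the restricted $p$-operation is induced by the group's $p$-th power map, using $D_n^p \subseteq D_{np}$. Both containments are built into the inductive definition of $D_n$ that is recalled in the excerpt.

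The next step, which is the heart of the matter, is Lazard's classical theorem: for $\Gamma = F_r$ the graded restricted Lie algebra $\mathrm{gr}(F_r)$ is isomorphic to the free restricted Lie algebra $L_r^{[p]}$ on $r$ generators over $\mathbb{F}_p$, with the free generators placed in degree one. I would establish (or more realistically, cite) this via the Magnus embedding $F_r \hookrightarrow \mathbb{F}_p\langle X_1, \ldots, X_r \rangle^\times$ sending $x_i \mapsto 1 + X_i$. Under this embedding, $D_n(F_r)$ corresponds precisely to $F_r \cap (1 + I^n)$, where $I$ is the augmentation ideal, so $\mathrm{gr}(F_r)$ injects into the associated graded $\bigoplus_n I^n / I^{n+1}$ of the tensor algebra as the restricted Lie subalgebra generated by the classes of the $X_i$. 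A standard theorem then identifies this subalgebra with the free restricted Lie algebra on those generators.

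Given the identification with $L_r^{[p]}$, the remainder is a dimension count. A basis for $L_r^{[p]}$ is obtained by taking a Hall basis $\{y_\alpha\}$ of the ordinary free Lie algebra $L_r$ over $\mathbb{F}_p$ and adjoining all iterated $p$-power elements $y_\alpha^{[p^i]}$ for $i \geq 0$. Since the restricted $p$-operation multiplies degree by $p$, the degree-$n$ part satisfies
$$\dim_{\mathbb{F}_p} (L_r^{[p]})_n \;=\; \sum_{i \geq 0,\, p^i \mid n} \dim_{\mathbb{F}_p} (L_r)_{n/p^i}.$$
By Witt's classical formula (a consequence of Poincaré--Birkhoff--Witt applied to $L_r$ together with Möbius inversion on the relevant generating function), one has $\dim_{\mathbb{F}_p} (L_r)_m = w_m(F_r)$. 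Writing $n = p^k m$ with $p \nmid m$, the divisibility constraint restricts $i$ to $\{0, 1, \ldots, k\}$, and reindexing $j = k - i$ yields $\sum_{j=0}^{k} w_{p^j m}(F_r) = c_n(F_r)$, which is the claim.

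The principal technical obstacle is Lazard's identification of $\mathrm{gr}(F_r)$ with $L_r^{[p]}$; this is substantial background machinery, and I would invoke it directly from the references \cite{lazard1954groupes, minavc2016dimensions, dixon2003analytic} rather than reconstruct its proof. The remaining ingredients --- the verification that $\mathrm{gr}(F_r)$ is a restricted Lie algebra, Witt's formula, and the bookkeeping of $p$-powers --- are routine by comparison once the Lie-algebraic framework is in place.
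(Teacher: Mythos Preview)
Your outline is correct and is exactly the standard argument behind this result. Note, however, that the paper does not actually supply a proof of this proposition: it is stated with a citation to \cite[Proposition 3.4]{minavc2016dimensions} and a pointer to \cite{lazard1954groupes}, and then used as a black box. Your sketch --- pass to the associated graded, invoke Lazard's identification with the free restricted Lie algebra, and count via Witt's formula plus the $p$-power bookkeeping --- is precisely the content of those cited references, so there is nothing to compare beyond observing that you have correctly reconstructed what the paper chose to quote rather than reprove.
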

Since the growth of $c_n$ is exponential, we can now deduce:
\begin{corollary}
\label{exponential growth}
Let $G = F_r$ be the free group on $r \geq 2$ generators. Then for sufficiently large $n$,
$$\mathrm{dim}_{\mathbb{F}_p}(D_n(F_r) / D_{n+1}(F_r)) \geq \frac{1}{2n} \cdot r^n$$ and
$$[D_n(F_r): D_{n+1}(F_r)] \geq [G: D_{n + 1}(F_r)]^{\frac{1}{15}}$$
\end{corollary}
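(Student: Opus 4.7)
The plan is to reduce both inequalities via Proposition \ref{Zassenhaus dimension computation} to elementary asymptotic estimates for the Möbius-like sum $w_n(F_r)$ and its combination $c_n(F_r)$; the key observation is that for $r \geq 2$ both functions behave to leading order like $r^n/n$.

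For the first inequality, I would separate the main term in $w_n(F_r) = \frac{1}{n}\sum_{m \mid n} \mu(m) r^{n/m}$: the divisor $m = 1$ contributes exactly $r^n/n$, while the remaining terms are bounded in absolute value by $d(n) \cdot r^{n/2}$, where $d(n)$ is the number of divisors of $n$. Since $d(n) \cdot r^{n/2}/r^n \to 0$, one obtains $w_n(F_r) \geq r^n/(2n)$ once $n$ exceeds a threshold depending on $r$. Now writing $n = p^k m$ with $p \nmid m$, the definition $c_n = \sum_{i=0}^{k} w_{p^i m}$ contains $w_n$ itself as a summand, and Witt's formula guarantees all summands are nonnegative (being dimensions of graded pieces of a free Lie algebra). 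Hence $c_n(F_r) \geq w_n(F_r) \geq r^n/(2n)$, which by Proposition \ref{Zassenhaus dimension computation} is exactly the first inequality.

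For the second inequality, I would translate it into a statement about dimensions: since $[F_r : D_{n+1}(F_r)] = p^{\sum_{i=1}^{n} c_i(F_r)}$ and $[D_n(F_r) : D_{n+1}(F_r)] = p^{c_n(F_r)}$, it suffices to show $c_n(F_r) \geq \frac{1}{15}\sum_{i=1}^{n} c_i(F_r)$. I would complement the previous lower bound with a matching upper bound $c_i(F_r) \leq (1 + o(1)) r^i/i$, obtained by observing that in the defining sum each successive summand $w_{p^j m_i}$ is exponentially smaller than the last when $r, p \geq 2$. A standard geometric-series estimate then yields $\sum_{i=1}^{n} r^i/i \leq C_r \cdot r^n/n$ with $C_r$ an explicit constant at most $4$ for every $r \geq 2$ (the ratio of consecutive terms is bounded by $1/r$). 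Combining these, the ratio $c_n / \sum_{i \leq n} c_i$ is bounded below by an absolute constant (one easily achieves $\geq 1/10$), which is stronger than the required $1/15$.

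The whole argument is elementary; the only mild obstacle is the bookkeeping of constants. To confirm that the numerical constants $1/2$ and $1/15$ hold uniformly in $r \geq 2$ and in the prime $p$, one simply fixes a single threshold $n_0 = n_0(r, p)$ beyond which the error terms $d(n) \cdot r^{n/2}$ and the geometric-series tails are negligible compared to $r^n$.
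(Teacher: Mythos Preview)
Your argument is correct and follows essentially the same route as the paper: isolate the $m=1$ term in $w_n$ to get $w_n \sim r^n/n$, use $c_n \geq w_n$ for the first inequality, then combine the matching upper bound $c_i \leq (1+o(1))\,r^i/i$ with a geometric-type estimate on $\sum_{i\leq n} r^i/i$ to get $c_n \geq \tfrac{1}{15}\sum_{i\leq n} c_i$. The only quibble is the parenthetical ``the ratio of consecutive terms is bounded by $1/r$'': the actual ratio $\tfrac{r^{i-1}/(i-1)}{r^i/i}=\tfrac{i}{r(i-1)}$ is only $\leq 2/r$, so for $r=2$ you need the splitting trick (as the paper does) rather than a pure geometric-series bound, but this does not affect your conclusion.
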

\begin{proof}
For the first claim it is sufficient to notice that $$c_n(F_r) \geq w_n(F_r) \geq \frac{1}{n} \cdot (r^n - \sum_{k=0}^{\lfloor\frac{n}{2}\rfloor} r^k) \geq \frac{1}{n}(r^n - r^{\lfloor\frac{n}{2}\rfloor + 1})$$
which is greater than $\frac{1}{2n} \cdot r^n$ for sufficiently large $n$.
For the second claim, we start by giving an upper bound on $w_n(F_r)$
$$w_n(F_r) \leq \frac{1}{n} \cdot (r^n + \sum_{k=0}^{\lfloor\frac{n}{2}\rfloor} r^k) \leq \frac{2}{n} \cdot r^n$$
for every $n \geq 1$. We now claim that $c_n(F_r) < \frac{3}{n} \cdot r^n$ for sufficiently large $n.$ Indeed, $$c_n(F_r) \leq w_n(F_r) + \sum_{k=1}^{\lfloor\frac{n}{2}\rfloor} w_k(F_r)\leq \frac{2}{n} \cdot r^n + 2\cdot r^{\lfloor\frac{n}{2}\rfloor + 1} < \frac{3}{n} \cdot r^n$$
for sufficiently large $n,$ say $n \geq n_0$. Denote $M = \sum_{k=1}^{n_0 - 1}c_k(F_r)$.

We can now bound $$\sum_{k = 1}^{n - 1} c_k(F_r) \leq M + 3 \cdot \sum_{k=n_0}^{n - 1} \frac{r^k}{k} \leq M + 3 \cdot (r^{\lfloor \frac{n}{2} \rfloor + 1} + \frac{1}{\frac{n}{2}} \cdot r^n) < \frac{7}{n}  \cdot r^n < 14 \cdot c_n(F_r)$$ for sufficiently large $n.$ This implies that $c_n(F_r) > \frac{1}{15} \cdot \sum_{k=1}^n c_k(F_r)$ as needed.
\end{proof}
\section{Proofs of the main results}
Using the results from the previous section we are ready to prove Theorem \ref{Characteristic Growth of Free Groups}.
\begin{proof}[Proof of Theorem \ref{Characteristic Growth of Free Groups}]
Let $F_r$ be a free group of rank $r$. Let $D_n$ be its Zassenhaus-filtration. This is a descending sequence of characteristic subgroups of $F_r$.

$D_1 / D_2 \simeq (\mathbb{Z}/p)^r$ is an elementary $p$-group. By Lemma \ref{Automorphism Action}, any automorphism acting trivially on $D_1 / D_2$ also acts trivially on each of the subquotients of the Zassenhaus filtration.
The image of $\mathrm{Aut}(F_r)$ in $\mathrm{Aut}(D_1/D_2)$ is a finite group, specifically the group $\mathrm{SL}^{\pm1}_r(\mathbb{F}_p)$ of $r$-by-$r$ matrices in $\mathbb{F}_p$ of determinant $\pm 1$. Therefore, the action of $\mathrm{Aut}(F_r)$ on $D_n/D_{n+1}$ factors through this fixed finite group.

Denote by $N = [G : D_{n + 1}]$. Then $\mathrm{dim}(D_n / D_{n + 1}) \geq \frac{1}{15}\mathrm{log}_p(N)$ by Corollary \ref{exponential growth}.
Hence by Corollary \ref{Lots of Submodules}, we get that there are at least $e^{\frac{c}{225}\mathrm{log}_p(N)^2}$-submodules for this $\mathrm{SL}_r^{\pm 1}(\mathbb{F}_p)$-module. Each such submodule can be lifted and yields a subgroup $D_{n + 1} \leq H \leq D_n$. As this is a submodule for the $\mathrm{Aut}(F_r)$-action, the subgroup $H$ is characteristic in $D_n$. Moreover, the index of $H$ is bounded by $N$. This proves the claim.
\end{proof}
We now show how these methods in fact imply Theorem \ref{Characteristic Growth of Large Groups}.
\begin{proof}[Proof of Theorem \ref{Characteristic Growth of Large Groups}]
Let $\Gamma$ be a large group, and let $\Gamma_0 \leq \Gamma$ be a finite index subgroup with a surjective map to a non-abelian free group. By moving to a finite index subgroup, we may assume that $\Gamma_0 \leq \Gamma$ is a characteristic finite index subgroup. $\Gamma_0$ still surjects onto a finite index subgroup of the non-abelian free group, i.e. a non-abelian free group. This subgroup is clearly finitely generated as it is the homomorphic image of the finitely generated $\Gamma_0$. Denote the corresponding surjection by $\varphi: \Gamma_0 \to F_r$. We may also assume up to a further finite index subgroup that $r \geq 3$.

Consider $D_n(\Gamma_0)$ the Zassenhaus filtration of $\Gamma_0$. Since $\varphi$ is surjective, $\varphi(D_n(\Gamma_0)) = D_n(F_r)$. In particular, the map $\varphi$ induces a surjection $D_n(\Gamma_0) / D_{n + 1}(\Gamma_0) \twoheadrightarrow D_n(F_r) / D_{n + 1}(F_r)$, and hence we get a lower bound on the dimensions $c_n \coloneq \mathrm{dim}(D_n(\Gamma_0) / D_{n + 1}(\Gamma_0)) \geq \frac{1}{2n} \cdot r^n$.

We now know that the dimension sequence $c_n$ increases exponentially. It follows that for infinitely many $n$, $$\sum_{m<n} c_m \leq c_n.$$
Indeed, otherwise for some $n_0$, for every $n \geq n_0$ $c_n < \sum_{m < n} c_m$ and in particular $$c_n < 2^{n - n_0} \cdot \sum_{m = 1}^{n_0} c_m \leq 2^n \cdot M$$ for some constant $M > 0$ which is impossible as $r \geq 3$.

For every such $n$, it follows that $\mathrm{dim}(D_n(\Gamma_0) / D_{n+1}(\Gamma_0)) \geq \frac{1}{2} \mathrm{log}_p([\Gamma_0 : D_{n + 1}(\Gamma_0)])$. Arguing as in the proof of Theorem \ref{Characteristic Growth of Free Groups}, we get for $N = [\Gamma_0: D_{n + 1}(\Gamma_0)]$ at least $e^{c \mathrm{log}_p(N)^2}$ characteristic subgroups of $\Gamma_0$ of index $\leq N$. This implies the claim.
\end{proof}

\section{A group with different characteristic and normal subgroup growth types}
\label{Section on Segal's groups}
We will use the ideas of \cite{segal2000finite} in order to construct such a group. In this section let $\mathrm{Sym}(n)$ denote the symmetric group of permutations of $n$ points.

Firstly, the following lemma will be useful.
\begin{lemma}
\label{Normal subgroups of Wreath Products}
    Let $H \leq \mathrm{Sym}(n), K \leq \mathrm{Sym}(m)$ be transitive permutation groups, and let $G = H \wr K = H^m \rtimes K \leq \mathrm{Sym}(n \cdot m)$ be the permutational wreath product. Assume that $H \simeq S^q$ for some non-abelian finite simple group $S$, and $q \geq 1$.
    Then every quotient $\phi: G \twoheadrightarrow Q$ either factors through the quotient $G \twoheadrightarrow K$, or has kernel $N^m$ for some normal subgroup $N \lhd H$.
\end{lemma}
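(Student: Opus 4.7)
The plan is to analyze $L \coloneqq \ker \phi$ by intersecting it with the base group $B \coloneqq H^m$. Set $L_0 \coloneqq L \cap B$, which is normal in $G$, hence normal in $B$ and also invariant under the conjugation action of $K$ on $B$. I first show that $L_0 = N^m$ for some $N \lhd H$, and then split into two cases according to whether $N = H$ or $N \subsetneq H$.

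To identify $L_0$, observe that $B \cong S^{qm}$ is a direct product of copies of the non-abelian finite simple group $S$. A standard fact, proved by intersecting a normal subgroup with each simple factor and using $Z(S) = 1$, says that every normal subgroup of such a direct power has the form $\prod_{i \in I} S_i$ for some $I \subseteq \{1, \ldots, qm\}$. The action of $K$ on $B$ permutes the $m$ blocks of $q$ coordinates each without mixing coordinates inside a single block, so the $K$-invariance of $I$ together with the transitivity of $K$ on blocks forces $I$ to intersect each block in one and the same subset $J \subseteq \{1, \ldots, q\}$. Setting $N \coloneqq \prod_{j \in J} S_j \lhd H$, this gives $L_0 = N^m$.

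If $N = H$ then $B \leq L$ and $\phi$ factors through $G/B \cong K$, placing us in the first case. Otherwise $N \subsetneq H$, and I aim to show $L \leq B$, which then yields $L = L_0 = N^m$. Suppose for contradiction that there exists $g = (b, k) \in L$ with $k \neq 1$, and pick an index $i$ with $k(i) \neq i$. For any $h \in H$, let $h^{(i)} \in B$ denote the element with $h$ in coordinate $i$ and the identity elsewhere. A short computation in the semidirect product shows that the commutator $[h^{(i)}, g]$ lies in $B$ and has $h$ in coordinate $i$, a conjugate of $h^{-1}$ in coordinate $k(i)$, and the identity in all other coordinates. Since $[h^{(i)}, g] \in L \cap B = N^m$, projection to coordinate $i$ gives $h \in N$; as $h$ was arbitrary, this forces $N = H$, contradicting $N \subsetneq H$.

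The main technical point is the $K$-invariance analysis of $L_0$, which requires carefully separating the block-permutation action of $K$ from the internal $S^q$-structure of each $H$-factor. Once $L_0 = N^m$ is established, the commutator step ruling out elements of $L$ outside $B$ is short and essentially forced by the fact that the bottom projection picks up an arbitrary $h \in H$.
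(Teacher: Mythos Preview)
Your proof is correct and follows essentially the same approach as the paper: both first show $L\cap H^m=N^m$ via the normal-subgroup structure of $S^{qm}$ together with $K$-transitivity, and then derive a contradiction from an element $g\in L$ with nontrivial $K$-part. Your explicit commutator $[h^{(i)},g]\in N^m$ is just the computational form of the paper's observation that conjugation by $\phi(g)=1$ must act trivially on $(H/N)^m\hookrightarrow Q$ yet genuinely permutes the nontrivial factors.
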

\begin{proof}
    Assume that the map does not factor through $K$. In such a case, the image of $H^m$ is non-trivial, and is therefore a direct product of copies of $S$. Consider the kernel $M = \mathrm{ker}(\phi|_{H^m}) = \mathrm{ker}(\phi) \cap H^m \lhd G$. Then $M \neq H^m$.
    
    As $M$ is $K$-invariant, it follows that it has the form $N^m$ for some normal subgroup $N \lhd H$, and $N \neq H$. Indeed, it is the direct product of a subset of the copies of $S$, and this subset is $K$-invariant, for the transitive permutation group $K$.

    Assume by contradiction that there is an element $x \in \mathrm{ker}(\phi)$ whose image in $K$ is non-trivial, i.e. that is not contained in $H^m$. Then the conjugation action of $\phi(x)$ on $Q$ is trivial. However, this action permutes the copies of $H$ non-trivially, and therefore acts non-trivially on $(H / N)^m \hookrightarrow Q$, in contradiction.
\end{proof}
This lemma gives us control over normal subgroups of iterated wreath products of powers of non-abelian simple groups. We describe this precisely in the following corollary
\begin{corollary}
\label{Normal subgroup count for iterated wreath products}
    Let $(H_k)_{k=1}^n$ be finite transitive permutation groups, $H_k \leq \mathrm{Sym}(\ell_k)$. Assume that each of the $H_k$ is isomorphic to a power of a non-abelian finite simple group $S_k$, $H_k \simeq S_k^{m_k}$. Denote $W_0 = 1$, and define recursively $W_k = H_k \wr W_{k - 1} \leq \mathrm{Sym}(\ell_1 \cdots \ell_k)$ as the permutational wreath product. Then the number of normal subgroups of $W_n$ is $$1 + \sum_{k=1}^n (2^{m_k} - 1).$$
    Moreover, the normal subgroups of index less than $|W_{n - 1}|$ of $W_n$ are precisely the lifts of normal subgroups of under the quotient map $W_n \twoheadrightarrow W_{n - 1}$.
\end{corollary}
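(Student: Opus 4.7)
The plan is to prove both assertions by induction on $n$, applying Lemma \ref{Normal subgroups of Wreath Products} at each step. To invoke that lemma we first observe that $W_{n-1}$ is a transitive subgroup of $\mathrm{Sym}(\ell_1 \cdots \ell_{n-1})$; this follows by induction starting from $W_0 = 1$, since a permutational wreath product of transitive groups is again transitive on the product set. Taking $H = H_n \simeq S_n^{m_n}$ and $K = W_{n-1}$, the lemma tells us that every normal subgroup of $W_n = H_n \wr W_{n-1}$ is either of type (i), the preimage $\pi^{-1}(L)$ of some $L \lhd W_{n-1}$ under the projection $\pi : W_n \twoheadrightarrow W_{n-1}$, or of type (ii), a subgroup of the form $N^{\ell_1 \cdots \ell_{n-1}}$ for some $N \lhd H_n$.

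Next I would count the two families, taking care of the overlap. They intersect precisely at the kernel of $\pi$, namely $H_n^{\ell_1 \cdots \ell_{n-1}}$, which corresponds to $L = 1$ in (i) and to $N = H_n$ in (ii). A short commutator argument shows that for a non-abelian finite simple group $S$ the normal subgroups of $S^m$ are exactly the $2^m$ direct products indexed by subsets of the factors: a normal subgroup $M$ projects onto each coordinate as $1$ or $S$, and in each coordinate where the projection equals $S$ one recovers the full factor by forming a commutator with that coordinate. Hence $H_n \simeq S_n^{m_n}$ has exactly $2^{m_n}$ normal subgroups, of which $2^{m_n} - 1$ are proper. Writing $a_n$ for the number of normal subgroups of $W_n$, we obtain the recursion $a_n = a_{n-1} + (2^{m_n} - 1)$, which together with $a_0 = 1$ gives the stated formula.

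For the second assertion I would compare indices. Any subgroup of type (i) has index $[W_n : \pi^{-1}(L)] = [W_{n-1} : L] \leq |W_{n-1}|$, while a subgroup of type (ii) with $N \neq H_n$ has index
$$[W_n : N^{\ell_1 \cdots \ell_{n-1}}] = |W_{n-1}| \cdot [H_n : N]^{\ell_1 \cdots \ell_{n-1}} \geq |W_{n-1}| \cdot |S_n|^{\ell_1 \cdots \ell_{n-1}},$$
which is strictly greater than $|W_{n-1}|$. Hence any normal subgroup of $W_n$ of index less than $|W_{n-1}|$ is necessarily of type (i), i.e.\ a lift of a normal subgroup of $W_{n-1}$. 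The only step requiring nontrivial input is the classification of normal subgroups of $S_n^{m_n}$, which is classical; the remainder is a clean bookkeeping argument built on Lemma \ref{Normal subgroups of Wreath Products}.
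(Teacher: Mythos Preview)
Your proof is correct and follows essentially the same approach as the paper's: both argue by induction via Lemma \ref{Normal subgroups of Wreath Products}, splitting normal subgroups of $W_n$ into those containing $H_n^{\ell_1\cdots\ell_{n-1}}$ (lifts from $W_{n-1}$) and those of the form $N^{\ell_1\cdots\ell_{n-1}}$ with $N \lhd H_n$ proper. You supply more detail than the paper does---the transitivity of $W_{n-1}$, the explicit overlap analysis, the classical count of normal subgroups of $S_n^{m_n}$, and the index comparison for the second claim---but the underlying argument is the same.
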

\begin{proof}
    Denote $\ell = \ell_1 \cdots \ell_{n - 1}$, such that $W_{n - 1} \leq \mathrm{Sym}(\ell)$ and let $M \lhd W_n$.
    If $M$ contains $H_n^\ell$, then $M$ is induced by a normal subgroup of the quotient $W_n \twoheadrightarrow W_{n-1}$.
    Otherwise, by the previous Lemma, $M = N^\ell$ for some normal subgroup $N \lhd H_n$, and $N \neq H_n$, in which case it is a direct of a proper subset of the $S_n$ factors.

    The first claim now follows by induction on $n$, where the $1$ comes from the base case $n = 0$.

    The second claim follows directly from Lemma \ref{Normal subgroups of Wreath Products}.
\end{proof}

We can now build our required group using the following theorem of Segal \cite[Theorem 5]{segal2000finite}, based on an earlier construction of Grigorchuk \cite{grigorchuk2000just}.
\begin{theorem}
\label{Segal dense subgroup}
    Let $(H_n)_{n=1}^\infty$ be a sequence of finite transitive permutation groups $H_n \leq \mathrm{Sym}(\ell_n)$.
    Assume that for every $n$ the permutation group $H_n$ has distinct point stabilizers.
    
    Define $(W_n)_{n=1}^\infty$ by $W_0 = 1$ and $W_n = H_n \wr W_{n - 1} \leq \mathrm{Sym}(\ell_1 \cdots \ell_n)$ the permutational wreath product. Denote by $W$ the profinite group $W = \varprojlim W_n$.
    Assume further that there is a finitely generated perfect group $\Lambda$ such that $H_n$ is an epimorphic image of $\Lambda$ for every $n \geq 1$.
    
    Then there is a dense finitely generated subgroup $\Gamma \leq W$ with the congruence subgroup property, i.e. such that the natural map from the profinite completion, $\hat{\Gamma} \to W,$ is an isomorphism.
\end{theorem}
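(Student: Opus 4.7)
The plan is to invoke the self-similar construction of Grigorchuk, as adapted by Segal, on the spherically homogeneous rooted tree $T$ whose $n$-th level is indexed by $\{1,\ldots,\ell_1\}\times\cdots\times\{1,\ldots,\ell_n\}$ and on which $W$ acts as the inverse limit of the permutational wreath products. The strategy has three pieces: construct finitely many self-similar generators using $\Lambda$, verify density level by level using perfectness of $\Lambda$, and deduce the congruence subgroup property from the rigidity recorded in Corollary~\ref{Normal subgroup count for iterated wreath products}.

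First I would fix a generating set $a_1,\ldots,a_k$ of $\Lambda$ together with surjections $\pi_n\colon\Lambda\twoheadrightarrow H_n$, and define elements $g_1,\ldots,g_k\in W$ recursively: the local action of $g_i$ on the $\ell_n$ children of a vertex at level $n-1$ is prescribed to be $\pi_n(a_i)$, while the sections of $g_i$ at those children are given by a portrait consisting of $g_j$'s chosen according to a fixed combinatorial rule. The hypothesis that each $H_n$ has distinct point stabilizers is used here in an essential way, since it allows the portrait to be encoded and read off unambiguously from the local action at every vertex, so that the recursion defines honest elements of $W$.

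With $\Gamma=\langle g_1,\ldots,g_k\rangle$ in hand, the density proof would go by induction on $n$, showing that the image of $\Gamma$ in $W_n$ is all of $W_n$. The base case is the surjectivity of $\pi_1$. For the inductive step, the top quotient $W_n\twoheadrightarrow H_n$ is hit by $\pi_n(\Lambda)$; to also hit the base group $H_n^{\ell_1\cdots\ell_{n-1}}$, one writes each basis element as a product of commutators of $g_j$'s supported on single subtrees, using perfectness of $\Lambda$ (and hence of each $H_n$) together with the inductive density at level $n-1$ to realize the carriers of these commutators. For the congruence subgroup property, any $N\triangleleft\Gamma$ of finite index $m$ has closure $\overline{N}\triangleleft W$ projecting to a normal subgroup of index at most $m$ in each $W_n$; choosing $n$ with $|W_{n-1}|>m$, Corollary~\ref{Normal subgroup count for iterated wreath products} forces the image of $\overline{N}$ in $W_n$ to be the lift of a normal subgroup of $W_{n-1}$, so $\overline{N}$ contains the $n$-th level congruence kernel. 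Hence $N$ is itself a congruence subgroup, and the natural map $\hat{\Gamma}\to W$ is an isomorphism.

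The hard part will be the density step: arranging that finitely many self-similar elements project onto every $W_n$ simultaneously. Perfectness of $\Lambda$ is indispensable, since any abelian quotient would obstruct commutator-based propagation of density beyond the first level; the distinct-point-stabilizer condition is what makes the recursive portrait defining the $g_i$ both well-posed and compatible with the top actions inherited from the $\pi_n$.
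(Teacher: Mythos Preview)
The paper does not prove this theorem; it is quoted as \cite[Theorem~5]{segal2000finite}. Your outline of the construction of $\Gamma$ via self-similar portraits and the level-by-level density argument is in the spirit of Segal's proof, though the precise combinatorics of the portraits (so that perfectness of $\Lambda$ really does propagate surjectivity into the base groups at every level) is more delicate than your sketch indicates.

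There is, however, a genuine gap in your argument for the congruence subgroup property. First, your appeal to Corollary~\ref{Normal subgroup count for iterated wreath products} is illegitimate: that corollary assumes each $H_k$ is isomorphic to a power of a non-abelian finite simple group, which is not among the hypotheses of the theorem. In fact the appeal is also unnecessary, since every open subgroup of the inverse limit $W=\varprojlim W_n$ automatically contains some level kernel $\ker(W\to W_n)$ by definition. The real problem is the final step: from the fact that the closure $\overline{N}\leq W$ contains a level kernel you conclude that $N$ itself is a congruence subgroup. This is circular. What you have established is only that $\overline{N}\cap\Gamma\supseteq\ker(\Gamma\to W_n)$; the assertion that $N=\overline{N}\cap\Gamma$ for every finite-index normal $N\lhd\Gamma$ is exactly the congruence subgroup property you are trying to prove. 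Segal's argument does not go via closures in $W$ at all: it exploits the branch structure of $\Gamma$ on the tree (rigid stabilizers, and in particular the fact that $\Gamma$ contains the finitary group $\varinjlim W_n$; cf.\ Proposition~\ref{Inductive construction segal}) to show directly that every non-trivial normal subgroup of $\Gamma$ contains a full level stabilizer in $\Gamma$.
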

We will need something more about the construction of this dense subgroup $\Gamma$ that follows directly from the method of proof of Theorem \ref{Segal dense subgroup}, which we describe in the following Proposition:
\begin{proposition}
\label{Inductive construction segal}
    The group $\Gamma$ constructed in Theorem \ref{Segal dense subgroup} satisfies two further properties:
    \begin{enumerate}
        \item $\varinjlim W_n \leq \Gamma$.
        \item For every $n$ there is a subgroup $\Gamma_n \leq \Gamma$ such that $\Gamma \simeq \Gamma_n \wr W_n = \Gamma_n^{\ell_1 \cdots \ell_n} \rtimes W_n$, the permutational wreath product.
    \end{enumerate}
\end{proposition}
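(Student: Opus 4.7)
The plan is to unpack the proof of Theorem~\ref{Segal dense subgroup}, which produces $\Gamma$ concretely as a branch subgroup of $W$ acting on the rooted tree $T$ whose vertices at level $k$ are indexed by $\{1,\ldots,\ell_1\}\times\cdots\times\{1,\ldots,\ell_k\}$. Segal's generating set for $\Gamma$ consists of a copy of the top-level permutation group $H_1=W_1$, acting only on the first tree coordinate, together with a finite set of ``recursively diagonal'' elements built from the fixed surjections $\Lambda\twoheadrightarrow H_n$. The structural feature I want to extract is that this construction is self-similar: the $\Gamma$-stabilizer of any vertex at level one, restricted to its subtree, is an isomorphic copy of a single subgroup $\Gamma_1\leq\Gamma$, and by the symmetry of the generators the same statement holds at every vertex.

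For property (2) I would first handle $n=1$. The top-level generators give a copy of $W_1$ inside $\Gamma$; the pointwise stabilizer of level one is a direct product of $\ell_1$ commuting copies of $\Gamma_1$, one per subtree; and the copy of $W_1$ permutes these factors by permuting the subtrees. Together this is exactly the wreath product $\Gamma\cong\Gamma_1\wr W_1$. Iterating, $\Gamma_1$ has the same self-similar shape with $H_2,H_3,\ldots$ in place of $H_1,H_2,\ldots$, so $\Gamma_1\cong\Gamma_2\wr H_2$, and reassociating the nested wreath products via the identity $\Gamma_n\wr W_n\cong(\Gamma_{n+1}\wr H_{n+1})\wr W_n\cong\Gamma_{n+1}\wr(H_{n+1}\wr W_n)=\Gamma_{n+1}\wr W_{n+1}$ yields $\Gamma\cong\Gamma_n\wr W_n$ for every $n$ by induction.

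Property (1) then follows: each decomposition $\Gamma\cong\Gamma_n^{\ell_1\cdots\ell_n}\rtimes W_n$ provides a splitting $s_n\colon W_n\hookrightarrow\Gamma$ of the natural surjection $\Gamma\twoheadrightarrow W_n$ coming from the congruence subgroup property. The reassociation identity above shows that $s_{n+1}$ restricts to $s_n$ on the subgroup $W_n\leq W_{n+1}$, so the splittings assemble into an embedding $\varinjlim W_n\hookrightarrow\Gamma$.

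The main obstacle is purely bookkeeping rather than substance: one must trace through Segal's recursive definition of the generators to verify that the $\Gamma$-action on each level-one subtree really is by one and the same subgroup $\Gamma_1$, rather than by different subgroups that merely share a common projection. Once that is in place, the wreath product decomposition and the compatibility of the splittings are forced, and the proposition indeed ``follows directly'' from the method of proof of Theorem~\ref{Segal dense subgroup}.
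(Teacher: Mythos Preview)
Your proposal is correct and follows essentially the same route as the paper. The paper invokes \cite[Lemma~1]{segal2000finite} to obtain $\Gamma=\mathrm{top}_\Gamma(v)\wr H_1$ from the equality $\mathrm{rist}_\Gamma(1)=\mathrm{st}_\Gamma(1)$ together with $H_1\leq\Gamma$, and then appeals to the recursive definition of $\Gamma$ for both claims; your level-one wreath decomposition and subsequent iteration via associativity of permutational wreath products is the same argument spelled out, and your derivation of (1) from the compatible splittings in (2) is a clean way to package what the paper leaves implicit.
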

\begin{proof}
    We follow the notation of \cite{segal2000finite}. In this notation $\Gamma$ is denoted by $A$ and $H_n$ is denoted by $S_n$, for $n \geq 0$ instead of $n \geq 1$.

    The group $A$ is constructed as a subgroup of $W$ which is thought of as the automorphism group of an infinite rooted tree, the degrees of the vertices at level $n$ given by the size of the underlying set of the permutation group $S_n$.

    In this notation, \cite[Lemma 1]{segal2000finite} implies that $A$ is the wreath product $\mathrm{top}_A(v) \wr S_0$, where $v$ is an arbitrary vertex at level $1$ of the tree, and the subgroup $\mathrm{top}_A(v)$ is the subgroup of automorphism acting trivially on every vertex outside the subtree of $v$. More specifically this follows from the fact that $\mathrm{rist}_A(1) = \mathrm{st}_A(1)$, where $\mathrm{st}_A(1)$ is the subgroup preserving the first level of the action on the tree, and $\mathrm{rist}_A(1) = \prod_{v \in [\ell_0]} \mathrm{top}_A(v)$, the product taken over vertices $v$ at the first level of the tree, and the fact that $S_0 \leq A$.

    The two claims now follow by the recursive definition of the group $A$, see the proof of \cite[Theorem 5]{segal2000finite}.
\end{proof}
In order to use Theorem \ref{Segal dense subgroup}, we need the following lemma
\begin{lemma}
\label{Suslin group}
    There is a finitely generated perfect group $\Lambda$ with epimorphisms to $\mathrm{PSL}_3(\mathbb{F}_p)^p$ for every prime $p$.
\end{lemma}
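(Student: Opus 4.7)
The plan is to take $\Lambda = E_3(\mathbb{Z}[x])$, the subgroup of $\mathrm{SL}_3(\mathbb{Z}[x])$ generated by elementary matrices $E_{ij}(f)$ with $i \neq j$ and $f \in \mathbb{Z}[x]$. The main engine is a ring surjection $\mathbb{Z}[x] \twoheadrightarrow \mathbb{F}_p^p$ available for every prime $p$: reduce coefficients mod $p$ and then reduce mod $x^p - x$. Since $x^p - x = \prod_{c \in \mathbb{F}_p}(x - c)$ in $\mathbb{F}_p[x]$ by Fermat's little theorem, the Chinese Remainder Theorem gives $\mathbb{F}_p[x]/(x^p - x) \cong \prod_{c \in \mathbb{F}_p}\mathbb{F}_p = \mathbb{F}_p^p$. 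Functoriality of $E_3$, followed by the quotient by the product of centers, will yield the desired surjection $\Lambda \twoheadrightarrow \mathrm{PSL}_3(\mathbb{F}_p)^p$.

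To establish that $\Lambda$ is finitely generated, I would use the Steinberg commutator identity $[E_{ij}(a), E_{jk}(b)] = E_{ik}(ab)$ valid for pairwise distinct $i, j, k$ — which is available precisely because we work in rank $\geq 3$. Starting from the six generators $E_{ij}(1)$ and $E_{ij}(x)$ with $i \neq j$ in $\{1,2,3\}$, iterated commutators produce $E_{ij}(x^n)$ for every $n \geq 0$, and the additive relation $E_{ij}(a + b) = E_{ij}(a)E_{ij}(b)$ recovers $E_{ij}(f)$ for arbitrary $f \in \mathbb{Z}[x]$. Hence $\Lambda$ is $6$-generated.

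For perfectness, the same Steinberg identity shows that each elementary generator is itself a commutator: $E_{ij}(f) = [E_{ik}(f), E_{kj}(1)]$ for any $k$ distinct from $i, j$. Since $\Lambda$ is generated by elementary matrices, $\Lambda = [\Lambda, \Lambda]$.

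Finally, to see that the induced map $\Lambda \to \mathrm{SL}_3(\mathbb{F}_p)^p$ is surjective, note that $\mathrm{SL}_3$ commutes with finite products of commutative rings, so $\mathrm{SL}_3(\mathbb{F}_p^p) = \mathrm{SL}_3(\mathbb{F}_p)^p$; and over a field, Gaussian elimination gives $\mathrm{SL}_3 = E_3$. Thus the target is generated by elementary matrices, each of which manifestly lifts to $\Lambda$ since $\mathbb{Z}[x] \twoheadrightarrow \mathbb{F}_p^p$ is surjective on coefficients. Composing with the quotient by the center then gives the surjection onto $\mathrm{PSL}_3(\mathbb{F}_p)^p$. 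I do not anticipate a genuine obstacle here: the whole argument is a functorial repackaging of CRT together with standard elementary-matrix identities, and the choice of the polynomial $x^p - x$ is precisely what forces the number of simple factors in the target to match the prime $p$.
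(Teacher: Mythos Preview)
Your proof is correct and, at bottom, uses the same group and the same evaluation maps as the paper: by Suslin's theorem $E_3(\mathbb{Z}[x]) = \mathrm{SL}_3(\mathbb{Z}[x])$, and your ring surjection $\mathbb{Z}[x] \twoheadrightarrow \mathbb{F}_p[x]/(x^p-x)$ is precisely the bundle of $p$ specializations $t \mapsto a$, $a \in \mathbb{F}_p$, that the paper writes out one at a time. The execution differs in two places. First, the paper cites Suslin to get finite generation and perfectness of $\mathrm{SL}_3(\mathbb{Z}[t])$, whereas you establish both directly for $E_3(\mathbb{Z}[x])$ from the Steinberg commutator identity $[E_{ij}(a),E_{jk}(b)]=E_{ik}(ab)$; this is more elementary and self-contained, and avoids importing a nontrivial $K$-theoretic result. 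Second, for surjectivity onto the $p$-fold product the paper checks that the $p$ individual maps to $\mathrm{PSL}_3(\mathbb{F}_p)$ are pairwise inequivalent up to automorphism and then invokes a Goursat-type ``Chinese Remainder Theorem for finite simple groups'', while you apply the ordinary Chinese Remainder Theorem at the level of the coefficient ring and use $\mathrm{SL}_3(R_1 \times R_2) = \mathrm{SL}_3(R_1) \times \mathrm{SL}_3(R_2)$, which sidesteps the inequivalence verification entirely.
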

\begin{proof}
    Let $\Lambda = \mathrm{SL}_3(\mathbb{Z}[t])$, for an indeterminate $t$. Suslin proved \cite{suslin1977structure} that $\Lambda$ is perfect and finitely generated. Note that $\Lambda$ contains every upper and lower unipotent matrix $$\begin{pmatrix}1 & A(t) & B(t)\\ 0 & 1 & C(t) \\ 0 & 0 & 1\end{pmatrix}, \begin{pmatrix} 1 & 0 & 0\\ A(t) & 1 & 0 \\ B(t) & C(t) & 1\end{pmatrix}$$
    for any three polynomials $A, B, C \in \mathbb{Z}[t]$. It follows that any specialization to the finite field $\mathbb{F}_p$ given by substituting $t = a$ for some $a \in \mathbb{F}_p$ will have surjective image to both the upper and lower unipotents, and as these generate $\mathrm{SL}_3(\mathbb{F}_p)$, will have surjective image to $\mathrm{SL}_3(\mathbb{F}_p)$.

    Also, note that no two of the induced maps to $\mathrm{PSL}_3(\mathbb{F}_p)$ are the same (even up to an automorphism of $\mathrm{PSL}_3(\mathbb{F}_p)$), as we may find an upper unipotent that is trivial in one map and non-trivial in the other.

    We now use the classical fact, sometimes known as the Chinese Remainder Theorem for finite simple groups, that if a tuple of maps $\Lambda \to S = \mathrm{PSL}_3(\mathbb{F}_p)$ is surjective, and no two are equivalent up to an automorphism of $S$, then the joint map to $S^p$ is also surjective.
\end{proof}
We are now ready to prove Proposition \ref{A group with larger normal subgroup growth}.
\begin{proof}[Proof of Proposition \ref{A group with larger normal subgroup growth}]
Let $p_k$ denote the $k^{\mathrm{th}}$ prime number. Let $S_k = \mathrm{PSL}_3(\mathbb{F}_{p_k})$. $S_k$ is naturally a transitive permutation group of the projective plane $\mathbb{P}^2_{p_k}$, i.e. $S_k \leq \mathrm{Sym}(p_k^2 + p_k + 1).$ Note that this permutation action has distinct point stabilizers.

Let $H_k = S_k^{p_k} \leq \mathrm{Sym}((p_k^2 + p_k + 1)^{p_k})$, be the permutation group acting on tuples of $p_k$ points in the projective planes. Once again, this permutation action is transitive, and has distinct point stabilizers. Note also that $\mathrm{Sym}(p_k)$ has a natural action on this set of tuples, by permuting its coordinates. This yields the permutation group $S_k \wr \mathrm{Sym}(p_k) = H_k \rtimes \mathrm{Sym}(p_k) \leq \mathrm{Sym}((p_k^2 + p_k + 1)^{p_k})$.

Now, define $W_k, W$ as in Theorem \ref{Segal dense subgroup}, $W_k = H_k \wr W_{k-1}$, and $W = \varprojlim W_k$. Then using Theorem \ref{Segal dense subgroup} and Lemma \ref{Suslin group}, we obtain a finitely generated $\Gamma$ with profinite completion $\hat{\Gamma} \simeq W$.

By Corollary \ref{Normal subgroup count for iterated wreath products}, the number of normal subgroups of $\Gamma$ of index less than $|W_n|$ is $$1 + \sum_{k=1}^n (2^{p_k} - 1) > 2^{p_n} > 2^n.$$
Noting that the groups $H_n$ are all characteristically simple, and using Lemma \ref{Normal subgroups of Wreath Products} the number of characteristic open subgroups of $W$ of index less than $|W_n|$ is precisely $n + 1$, corresponding to the kernels of the maps $W \to W_k, k \leq n$. We wish to prove that these are precisely the characteristic subgroups of $\Gamma$ as well, and this will finish the proof. For this, we will need to lift sufficiently many automorphisms of $W_n$ to automorphisms of $\Gamma$. This will be done using Proposition \ref{Inductive construction segal}.

We have seen that $H_n$ has $\mathrm{Sym}(p_n)$ as outer automorphism group, and these automorphisms also naturally act on the underlying set of the permutation group $H_n$. These automorphisms can be lifted to automorphisms of $W_n$ by acting trivially on $W_{n-1}$ and acting on each of the copies of $H_n$ simultaneously. These yield automorphisms of $W_n$ that also act naturally on the underlying set of the permutation group $W_n$.

Now, using Proposition \ref{Inductive construction segal}, as $\Gamma$ is a permutational wreath product $\Gamma \simeq \Gamma_n \wr W_n$, these automorphisms of $W_n$ can be extended to an automorphism of $\Gamma$, where the action of the automorphism on the copies of $\Gamma_n$ are by permuting the coordinates. A normal finite index subgroup of $\Gamma$ preserved by all of these automorphisms must be the kernel of one of the maps $\Gamma \to W_n$, by Lemma \ref{Normal subgroups of Wreath Products}.
\end{proof}
\section{Remarks and Suggestions for future research}
\label{Final section}
\subsection{Alternative Proof for $\mathrm{Aut}(F_2)$}
By a result of Dyer, Formanek and Grossman \cite{dyer1982linearity}, the braid group $B_4$ and the automorphism group $\mathrm{Aut}(F_2)$ are related. More specifically, $B_4 / Z(B_4) \simeq \mathrm{Aut}^+(F_2)$, where $\mathrm{Aut}^+(F_2) \leq \mathrm{Aut}(F_2)$ is the index-$2$ subgroup acting by orientation preserving linear transformations of the abelianization $\mathbb{Z}^2$ of $F_2$. This isomorphism is induced by the conjugation action of $B_4$ on the subgroup $F_2 \simeq \langle \sigma_1\sigma_3^{-1}, \sigma_2\sigma_1\sigma_3^{-1}\sigma_2^{-1} \rangle \lhd B_4$.

This fact was used by Chen, Tiep and the second author \cite{chen2025finite} in order to generate characteristic simple quotients of $F_2$ of type $\mathrm{PSL}_3(\mathbb{F}_q), \mathrm{PSU}_3(\mathbb{F}_q)$ for all but finitely many (explicitly computed) $q$. These quotients were obtained by specializing the Burau representation $B_4 \to \mathrm{PGL}_3(\mathbb{Z}[q, q^{-1}])$, noting that the image of $F_2$ under this representation is contained in $\mathrm{PSL}_3(\mathbb{Z}[q, q^{-1}]).$ See \cite{hanany2025} for a far reaching generalization, however for our purposes the original ideas of \cite{chen2025finite} are sufficient.

It was shown in \cite{chen2025finite} that there are infinitely many specializations of the Burau representation of $F_2$ to fields $\mathbb{F}_{3^n}$ with surjective image $\mathrm{PSL}_3(\mathbb{F}_{3^n})$. It follows that this representation must have Zariski-dense image in $\mathrm{PSL}_3(\mathbb{F}_3[q, q^{-1}])$. Indeed, if the Zariski closure was a proper subgroup of $\mathrm{PSL}_3$, then it would have lower dimension, and then a Zariski-open subset of the algebraic curve $\mathrm{spec}(\mathbb{F}_3[q, q^{-1}])$ would have specialization of this dimension, and hence at most finitely many specializations can be of the form $\mathrm{PSL}_3(\mathbb{F}_{3^n})$.

Moreover, it was shown in \cite[Proposition 2.8]{chen2025finite} that specializations of this representation such that $F_2$ has surjective image to $\mathrm{PSL}_3$ give characteristic quotients of $F_2$. This was done by extending the map $F_2 \to \mathrm{PSL}_3(\mathbb{Z}[q, q^{-1}])$ to a map $\mathrm{Aut}(F_2) \to \mathrm{Aut}(\mathrm{PSL}_3(\mathbb{Z}[q, q^{-1}]))$, in a way that a finite index subgroup $\Gamma \leq \mathrm{Aut}(F_2)$ is mapped to the inner automorphisms $\mathrm{Inn}(\mathrm{PSL}_3(\mathbb{Z}[q, q^{-1}])) \simeq \mathrm{PSL}_3(\mathbb{Z}[q, q^{-1}])$. This $\Gamma$ is simply the index-$3$ subgroup of $\mathrm{Aut}^+(F_2)$ mapping to the index-$3$ subgroup $\mathrm{PSL}_3(\mathbb{Z}[q, q^{-1}]) \leq \mathrm{PGL}_3(\mathbb{Z}[q, q^{-1}]).$

We can now use \cite[Theorem B part ii]{larsen2004g2}. We have a Zariski-dense $\mathrm{SL}_3$-representation in characteristic $3$, so we obtain that the normal subgroup growth type of $\mathrm{Aut}(F_2)$ is $n^{\mathrm{log}(n)}$. Reading through the proof of this Theorem, this plethora of finite quotients is obtained using quotients of the form $\mathrm{SL}_3(\mathbb{F}_3[t] / (t^{3r}))$ for appropriate choices of $r$. This quotient has large center, and considering the family of its quotients by central subgroups gives us our needed finite quotients. In order to obtain the characteristic subgroup growth of $F_2$, it suffices to consider the same quotient, thinking of its center as a $G = \mathrm{Aut}(F_2) / \Gamma$-module, and divide by submodules of this $G$-module as in Corollary \ref{Lots of Submodules}. Note that we have used here that the image of the subgroup of inner automorphisms $F_2 \simeq \mathrm{Inn}(F_2) \lhd \mathrm{Aut}(F_2)$ also has Zariski-dense image and so will surject to these quotients, thereby yielding characteristic quotients of $F_2$.

\subsection{Characteristic growth of Lattices in semi-simple Lie groups}
Lattices in the Lie group $\mathrm{PSL}_2(\mathbb{R})$ are either virtually free groups or virtually surface groups. In both cases, these lattices are large. It follows from Theorem \ref{Characteristic Growth of Large Groups} that they have characteristic subgroup growth of type $n^{\mathrm{log}(n)}$, and in particular they have the same characteristic and normal subgroup growth types.

It follows from works of Agol \cite{agol2013virtual} and Wise \cite{wise2021structure} that lattices in $\mathrm{PSL}_2(\mathbb{C})$ are also large (see \cite[pp. 39]{aschenbrenner20123} for a detailed explanation of this fact) and so again these groups have the same characteristic and normal subgroup growth types, of type $n^{\mathrm{log}(n)}$.

A similar result can be shown for irreducible arithmetic lattices in high rank semi-simple Lie groups with the congruence subgroup property. In such a case, \cite[Theorem C]{larsen2004g2} computes the normal congruence subgroup growth, which is the same as the normal subgroup growth assuming the congruence subgroup property. There are two relevant cases.

Lattices in $G_2, F_4, E_8$ have polynomial normal subgroup growth, and also normal characteristic subgroup growth as the prinicipal congruence subgroups are all characteristic.

In every other case, the normal subgroup growth has type $n^{\frac{\mathrm{log}(n)}{\mathrm{log}(\mathrm{log}(n))^2}}.$ We use the method of proof in \cite{larsen2004g2}, showing that these finite quotients come from a specific product of quasi-simple groups, with large center. Such a quotient then yields plenty of other quotients by dividing by its central subgroups. Once again using Corollary \ref{Lots of Submodules} for the action of the finite outer automorphism group of the lattice (the finiteness following from Mostow rigidity), we get that the characteristic subgroup growth has the same type as the normal subgroup growth, $n^{\frac{\mathrm{log}(n)}{\mathrm{log}(\mathrm{log}(n))^2}}$.

The previous discussion leads to the following natural question
\begin{question}
Let $\Gamma$ be a finitely generated group, and assume that $\mathrm{Out}(\Gamma)$ is finite. Does $\Gamma$ have the same characteristic and normal subgroup growth types?
\end{question}
We could not find a simple proof of this fact, however, if it is true, then Mostow's rigidity theorem would imply that any irreducible lattice in a semi-simple Lie group has the same characteristic and normal subgroup growth types. Indeed, such a group either has finite outer automorphism group, or is virtually a lattice in $\mathrm{PSL}_2(\mathbb{R})$ and therefore large.
\bibliographystyle{alpha}
\bibliography{bibli}

\begin{thebibliography}{DDSMS03}

\bibitem[AFW15]{aschenbrenner20123}
Matthias Aschenbrenner, Stefan Friedl, and Henry Wilton.
\newblock {\em 3-manifold groups}.
\newblock European Mathematical Society, Zürich, 2015.

\bibitem[Ago13]{agol2013virtual}
Ian Agol.
\newblock The virtual {H}aken conjecture (with an appendix by ian agol, daniel groves and jason manning).
\newblock {\em Documenta Mathematica}, 18:1045--1087, 2013.

\bibitem[BSP20]{barnea2020large}
Yiftach Barnea and Jan-Christoph Schlage-Puchta.
\newblock Large normal subgroup growth and large characteristic subgroup growth.
\newblock {\em Journal of Group Theory}, 23(1):1--15, 2020.

\bibitem[CLT25]{chen2025finite}
William~Y Chen, Alexander Lubotzky, and Pham~Huu Tiep.
\newblock Finite simple characteristic quotients of the free group of rank 2.
\newblock {\em Commentarii Mathematici Helvetici}, 2025.

\bibitem[DDSMS03]{dixon2003analytic}
John~D Dixon, Marcus~PF Du~Sautoy, Avinoam Mann, and Dan Segal.
\newblock {\em Analytic pro-p groups}.
\newblock Number~61. Cambridge University Press, 2003.

\bibitem[DFG82]{dyer1982linearity}
Joan~L Dyer, Edward Formanek, and Edna~K Grossman.
\newblock On the linearity of automorphism groups of free groups.
\newblock {\em Archiv der Mathematik}, 38(1):404--409, 1982.

\bibitem[Gri00]{grigorchuk2000just}
Rostislav~I Grigorchuk.
\newblock Just infinite branch groups.
\newblock In {\em New horizons in pro-p groups}, pages 121--179. Springer, 2000.

\bibitem[Han25]{hanany2025}
Liam Hanany.
\newblock The {Y}ang-{B}axter equation and characteristic finite simple quotients of the free group of rank $2$.
\newblock {\em In preparation}, 2025.

\bibitem[Laz54]{lazard1954groupes}
Michel Lazard.
\newblock Sur les groupes nilpotents et les anneaux de {L}ie.
\newblock In {\em Annales scientifiques de l'{\'E}cole Normale Sup{\'e}rieure}, volume~71, pages 101--190, 1954.

\bibitem[LL04]{larsen2004g2}
Michael Larsen and Alexander Lubotzky.
\newblock The ({G}2, {F}4, {E}8)-theorem.
\newblock In {\em Algebraic Groups and Arithmetic}, volume~17, page 441. Narosa Publishing House, 2004.

\bibitem[LS12]{lubotzky2012subgroup}
Alexander Lubotzky and Dan Segal.
\newblock {\em Subgroup growth}, volume 212.
\newblock Birkh{\"a}user, 2012.

\bibitem[Lub01]{lubotzky2001enumerating}
Alexander Lubotzky.
\newblock Enumerating boundedly generated finite groups.
\newblock {\em Journal of Algebra}, 238(1):194--199, 2001.

\bibitem[Man98]{mann1998enumerating}
Avinoam Mann.
\newblock Enumerating finite groups and their defining relations.
\newblock {\em J. Group Theory}, 1(1):59--64, 1998.

\bibitem[MRT16]{minavc2016dimensions}
J{\'a}n Min{\'a}{\v{c}}, Michael Rogelstad, and Nguy{\~{e}}n~Duy T{\^a}n.
\newblock Dimensions of zassenhaus filtration subquotients of some pro-p-groups.
\newblock {\em Israel Journal of Mathematics}, 212(2):825--855, 2016.

\bibitem[Riv]{Thurston}
Igor Rivin.
\newblock Counting characteristic subgroups.
\newblock \url{https://mathoverflow.net/questions/51053/counting-characteristic-subgroups}.

\bibitem[Seg00]{segal2000finite}
Dan Segal.
\newblock The finite images of finitely generated groups.
\newblock {\em Proceedings of the London Mathematical Society}, 82(3):597--613, 2000.

\bibitem[Sus77]{suslin1977structure}
Andrei~A Suslin.
\newblock On the structure of the special linear group over polynomial rings.
\newblock {\em Mathematics of the USSR-Izvestiya}, 11(2):221, 1977.

\bibitem[Wis21]{wise2021structure}
Daniel~T Wise.
\newblock {\em The Structure of Groups with a Quasiconvex Hierarchy: (AMS-209)}.
\newblock Annals of Mathematics Studies. Princeton University Press, 2021.

\end{thebibliography}

\vspace{2cm}

\noindent{\textsc{DPMMS, Centre for Mathematical Sciences}}

\noindent{\textsc{Wilberforce Road, CB3 0WB,}}

\noindent{\textsc{Cambridge, UK}}

\textit{Email address:} \texttt{liamhanany@gmail.com}

\vspace{0.5cm}

\noindent{\textsc{Department of Mathematics,}}

\noindent{\textsc{Faculty of Mathematics and Computer Science,}}

\noindent{\textsc{Weizmann Institute of Science,}}

\noindent{\textsc{Rehovot, Israel}}

\textit{Email address:} \texttt{alex.lubotzky@mail.huji.ac.il}

\end{document}